\newtheorem{theorem}{Theorem}[section]
\newtheorem{lemma}[theorem]{Lemma}
\newtheorem{proposition}[theorem]{Proposition}
\newtheorem{corollary}[theorem]{Corollary}
\newtheorem{definition}[theorem]{Definition}
\newtheorem{remark}[theorem]{Remark}
\newcommand \ZZ {\mathbb{Z}}
\newcommand \CC {\mathbb{C}}
\newcommand \RR {\mathbb{R}}
\newcommand \im {\text{im}}
\newcommand \aaa {\bar{a}}
\newcommand \cc {\bar{c}}
\newcommand \dd {\bar{d}}
\newcommand \ee {\bar{e}}
\newcommand \ff {\bar{f}}
\newcommand \HH {\bar{D}}
\newcommand \tr {\mathbb{T}}
\newcommand \pr {\mathbb{P}}
\begin{document}
\title{Tropical mixtures of star tree metrics}
\author{Mar\'ia Ang\'elica Cueto}
\thanks{The author was
  supported by a UC Berkeley Chancellor's Fellowship and the Laboratory for Mathematical and Computational Biology at UC Berkeley.}
\email{macueto@math.columbia.edu} \address{Mathematics Department,
  Columbia University, New York, NY, 10027, USA}

\keywords{Mixture
  models, (star) tree metrics, tropical secant varieties, toric
  varieties, phylogenetic trees} \subjclass[2010]{52B70,{(15A03,14M25)}}

\begin{abstract} We study tree metrics that can be realized as a
  mixture of two star tree metrics. We prove that the only trees
  admitting such a decomposition are the ones coming from a tree with
  at most one internal edge, and whose weight satisfies certain linear
  inequalities.  We also characterize the fibers of the
  corresponding mixture map.  In addition, we discuss the general
  framework of tropical secant varieties and we interpret our results
  within this setting. Finally, we show that the set of 
  tree metric ranks of metrics on $n$ taxa is unbounded.
  \end{abstract}
\maketitle

\section{Introduction}\label{s:Intro}

In the present paper we investigate tropical mixtures of two
\emph{star tree metrics}. Here, star tree metrics on $n$ taxa are star
trees with $n$ leaves labeled 1 through $n$, equipped with nonnegative
weights on all its edges. Tropical mixture of two metrics $D, \bar{D}$
on $n$ taxa are defined by point-wise maxima of
metrics on $n$ taxa, i.e.
\[(D\oplus \HH)_{(i,j)}=\max\{ D_{(i,j)}, \HH_{(i,j)}\} \text{ for all
} 1\leq i,j\leq n.\] This study was inspired by the recent work of
Matsen, Mossel and Steel on phylogenetic mixtures of trees
(\cite{MMS}, \cite{MatSt}), as well as by some open questions
regarding tropical secant varieties of linear spaces
(\cite{tropicalDevelin}, \cite{tropicalDraisma}).

Matsen and his collaborators consider phylogenetic mixtures (convex
combinations of site pattern frequencies) of two weighted trees of the
same topological type, and they characterize the conditions under
which these give rise to weighted trees of a different topology.  In
particular, they pay special attention to mixtures of star tree
metrics on four taxa, since tree topologies are characterized by the
subtrees spanned by four of its leaves. Their interest in studying
these mixtures lies in the \emph{branch repulsion phenomenon}. More
precisely, they observed that if we mix together two weighted trees on
four taxa with the same topology, with short internal branch lengths
and with pendant edges alternating being long and short, we are likely
to get a tree with a different topology. Intuitively, this phenomenon
implies that we can approximate any tree topology we want by mixing
together two star tree metrics. Thus, the interest in understanding
mixtures of star tree metrics.



In this paper, rather than being interested in phylogenetic mixtures, we
focus our attention on \emph{tropical mixtures} of star tree
metrics. 
Tropical mixtures arise naturally in the context of probability theory
as approximations of mixtures of multivariate Poisson distributions
(\cite[Chapter 3]{ASCB}) and logdet transforms defined for Markov
models on trees (\cite[Theorem~8.4.3]{Phylog},
\cite{ReconstructingTrees}).  Since log-limits of secant varieties of
toric varieties correspond to tropical secant varieties of linear
spaces, one should expect similar results between convex mixtures and
tropical mixtures of star tree metrics. 
Theorem~\ref{thm:mainResult} shows this connection.

In the language of algebraic geometry, the image of the convex mixture
map of two star tree metrics studied by Matsen et al. corresponds to
secant varieties of star tree metrics. If we allow negative weights
for the edges of our star trees, then mixtures of these objects
correspond to secant varieties of star trees.
%
%
Since the convex mixture map is polynomial, we can tropicalize both
the space of star trees and the map to obtain a piecewise linear map: the
\emph{tropical mixture map}. Its image will be the tropical first
secant of an $n$-dimensional linear subspace of $\RR^{\binom{n}{2}}$
corresponding to the tropicalization of the space of star trees
(\cite{tropicalDevelin}). If we replace the space of star trees by the
space of star tree metrics and we tropicalize both the space and the
mixture map, we obtain a piecewise linear map parameterizing the
first tropical secant of a polyhedral cone rather than of a linear
space.  
%
The extremal rays of this cone are the cut metrics $d_i$ assigning
$d_i(i,j)=1$ for all $j\neq i$ and where all other distances are
zero. These cut metrics also span the space of star trees. The
sign constraints on the coefficients of the linear combinations
in this cone reflect the sign constraints on the edge weights of our star trees.
While star trees admit edges with negative weights, this is forbidden
in the biological framework of star tree metrics. 

One interesting question that arises from these two settings involves
\emph{star tree} and \emph{star tree metric ranks} of a symmetric
matrix. Here, by star tree (resp.\ star tree metric) rank we mean the
minimum number $k$ of star trees (resp.\ star tree metrics) required to
express a given metric as a tropical mixture of $k$ star trees
(resp.\ star tree metrics). By relaxing the positivity condition on the
admissible edge weights, one
can show that every tree metric on $n$ taxa can be written as the
tropical mixture of $n-2$ star trees (\cite[Theorem 5]{DM}). However,
the analogous question for metrics and star tree metrics is more
delicate. Indeed, we prove that most \emph{cut metrics} on $n$ taxa
(defined by partitions of the $n$ taxa, assigning pairwise distance 0
to pairs of taxa on the same subset, and distance 1 otherwise) 
are not tropical mixtures of finitely many star tree metrics. This
implies that the set of star tree metric ranks is infinite. 


The paper is organized as follows. In Section 2 we recall the basic
definitions on distances over finite sets and tree metrics, and we
present our main result: a complete characterization of tree metrics
with star tree metric rank at most two, together with a description of
the fibers of the mixture map (Theorem~\ref{thm:mainResult}). 
All proofs are deferred to Section~\ref{s:proofs}.

In Section~\ref{s:TropicalSecantVarieties} we switch gears and we
describe the general framework of tropical secants of linear spaces as
treated in \cite{tropicalDevelin, tropicalDraisma}. In the case of star trees with
arbitrary weights, the associated linear space is spanned by $n$
vectors $\{r_i=\sum_{j<i} e_{ji}+\sum_{j>i}e_{ij}, 1\leq i \leq
n\}$. 
In the spirit of~\cite{tropicalDevelin, tropicalDraisma}, we can associate to this
linear space a point configuration in $\RR^{\binom{n}{2}}$, namely the
set of vertices of the well-studied second hypersimplex $\Delta(2,n)$. Tropical
mixtures  correspond to certain regular subdivisions of this
polytope. 
Tropical secants of star tree metrics have the same underlying
point configuration, but the crucial difference between these two
settings lies in the admissible regular subdivisions of
$\Delta(2,n)$, which we call \emph{positive regular
  subdivisions}. Moreover, we show they are different even in the
simplest case of trees on four taxa. In particular, the existence of
metrics with infinite star tree metric rank implies that regular
subdivisions of $\Delta(2,n)$ need not be positive. As a corollary, we
show that the well-known problem of characterizing metrics on $n$ taxa
with finite tree metric ranks cannot be approached by studying star
tree ranks (see \cite[Chapter 3]{ASCB} for a conjecture on this
topic). We finish this paper with several open questions on tropical
secant varieties of polyhedral cones, which provide the natural
setting to investigate mixtures of tree metrics.





\section{Basic definitions and main result}\label{s:BasicDefinition}
We begin by providing notation and basic definitions. We fix $[n]=\{1,
\ldots, n\}$.

\begin{definition}
  A \emph{tree} $T$ \emph{on $n$ taxa} is a connected
  graph with no cycles and with leaves labeled by $[n]$. We let
 $E(T)$ be the edges of $T$.
\end{definition}
It is well-known that trees are completely determined by
their list of quartets, i.e.~subtrees spanned by four
leaves. For example, the quartet $(ij|kl)$
represents the subtree in Figure~\ref{fig:quartet}.
\begin{figure}[ht]
  \centering
  \includegraphics[scale=0.35]{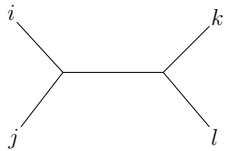}
\caption{Subtree determined by the quartet $(ij|kl)$.}
\label{fig:quartet}
\end{figure}
Many authors have discussed this tree representation and the minimum
number of quartets required to characterize the topological type of a
tree (\cite{314547}, \cite{quartetRepresentation}, \cite[$\S
5.4.2$]{treeOfLife}).

Given a tree $T$, we can assign nonnegative weights to
all edges in $T$ via a map $w\colon E(T) \to \RR_+$. The pair 
$(T,w)$ is called a \emph{weighted tree}. Throughout this section all
trees are weighted unrooted trees on $n$ taxa.

Given $i,j\in [n]$ and $(T,w)$ a tree, we define the \emph{distance}
between $i$ and $j$ to be $ d_T(i,j)=\sum_{e\in (i\rightarrow j)}w(e)$,
where $e$ varies along all edges in the unique path from $i$ to $j$.
We omit the subscript $T$ when understood. 

\begin{definition}\label{def:dissMap} A \emph{dissimilarity map} is a symmetric matrix
  $D\in \RR_+^{n\times n}$ such that $D_{ii}=0$ for all $i\in
  [n]$. Alternatively, whenever it is convenient for us we encode
  $D$ by a vector in $\RR_+^{\binom{n}{2}}$ corresponding the
  upper triangular portion of the original symmetric matrix.
  A dissimilarity map defines a \emph{metric}
  if it satisfies the triangular inequality $D_{ij}\leq D_{ik}
  +D_{kj}$ for all triples $i,j,k$.
\end{definition}
\begin{definition}
  A dissimilarity map $D$ is a \emph{tree metric} if there exists a
(unique)  weighted tree $(T,w)$ such that $D=d_T$. By abuse of notation we
  sometimes denote a tree metric by its unique associated weighted tree.
\end{definition}
  From the definition, it is straightforward to check that the set of
  dissimilarity maps is a closed pointed rational polyhedral cone
  isomorphic to $\RR_+^{\binom{n}{2}}$. Likewise, the set of
  metrics is a full dimensional closed pointed rational polyhedral
  cone contained in the cone of dissimilarity maps. However, 
 the space of tree metrics is a non-convex cone. It is a polyhedral
 complex, whose cells are convex cones.  Each maximal cell is a closed rational
polyhedral cone corresponding to a fixed tree topology and its
degenerations (obtained by setting some edge lengths to be zero). 
Its extremal rays are the compatible \emph{split metrics} $d_A$ defined by
subsets $A$ of $[n]$, that
determine the topology of the
tree $T$.  We define such split metrics as 
$d_A(i,j)=1$ if $\{i,j\}\not\subset A$ and $\{i,j\}\not\subset [n]\smallsetminus
A$, and $0$ in all other cases. The compatibility condition among $d_A$
and $d_{A'}$ says that one of the four intersections of $A$ or its
complement and $A'$ or its complement is empty. For a given tree
topology,  the cone spanned by the corresponding
compatible split metrics is isomorphic to the orthant
$\RR_+^{|E(T)|-n}$ (\cite[Prop~2.37]{ASCB}).
Tree metrics are
characterized algebraically by the ``Four point condition''
\cite[Theorem~2.36]{ASCB}:
\begin{theorem}\label{thm:4ptCondition} (Four point condition)
  A dissimilarity map $D\subset R_{+}^{\binom{n}{2}}$ is a tree metric \emph{if and only if} for
  any 4-tuple $i,j,k,l \in [n]$, the maximum among
  \[
\{D_{ij}+D_{kl}, D_{ik}+D_{jl}, D_{il}+D_{jk}\}
\]
is attained at least \emph{twice}. 
In particular, a tree metric $D$ is realized by a star
tree \emph{if and only if}
$D_{ij}+D_{kl}=D_{ik}+D_{jl}=D_{il}+D_{jk}$ for any $i,j,k,l$.
\end{theorem}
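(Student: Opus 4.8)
The plan is to prove the two nontrivial directions separately and then deduce uniqueness and the star-tree characterization, treating the \emph{only if} direction first since it is elementary. Suppose $D=d_T$ for a weighted tree $(T,w)$ and fix distinct $i,j,k,l$. The subtree of $T$ spanned by these four leaves is a quartet; after relabeling it has the topology $(ij|kl)$ of Figure~\ref{fig:quartet}, with a (possibly length-zero) internal edge of weight $e\ge 0$ separating the cherry $\{i,j\}$ from $\{k,l\}$. Summing $w$ along the relevant paths gives $D_{ik}+D_{jl}=D_{il}+D_{jk}=D_{ij}+D_{kl}+2e$, so the maximum of the three sums equals $D_{ik}+D_{jl}=D_{il}+D_{jk}$ and is attained at least twice, and exactly three times precisely when $e=0$. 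This proves the four-point condition and at the same time gives the easy direction of the star claim: if $T$ is a star then every quartet is degenerate, so all three sums coincide for each four-tuple.

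For the \emph{if} direction I would induct on $n$, carrying both existence and uniqueness. Applying the hypothesis to the degenerate four-tuple $(i,j,k,k)$ forces $D_{ij}\le D_{ik}+D_{jk}$, so $D$ is a metric and every three-point restriction is realized by a tripod with nonnegative pendant lengths $\tfrac12(D_{ij}+D_{ik}-D_{jk})$; this settles $n\le 3$. For the inductive step I restrict $D$ to $[n-1]$, note the restriction still satisfies the four-point condition, and obtain from the inductive hypothesis a weighted tree $T'$ with $d_{T'}=D|_{[n-1]}$. It then remains to graft leaf $n$ onto $T'$ so that all the new distances $D_{in}$ are recovered.

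The heart of the argument — and the step I expect to be the main obstacle — is showing that the four-point condition pins down a \emph{consistent} attachment point for leaf $n$. For each pair $i,j\in[n-1]$ the median of $\{i,j,n\}$ should sit on the path from $i$ to $j$ in $T'$ at distance $\tfrac12(D_{ij}+D_{in}-D_{jn})$ from $i$, and leaf $n$ should hang from it by a pendant edge of length $\tfrac12(D_{in}+D_{jn}-D_{ij})$. I would fix a reference leaf $1$, use the four-point conditions on the tuples $\{1,j,k,n\}$ to compare these medians as $j,k$ vary, and show that they all project to a single point $p$ of $T'$ (in the interior of an edge, at a vertex, or at a leaf) with a single well-defined pendant length to $n$. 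Subdividing the relevant edge of $T'$ at $p$ and attaching $n$ there produces a tree on $[n]$; the delicate bookkeeping is to check, quartet by quartet, that $d_T(i,n)=D_{in}$ for all $i$, which is exactly where the equalities forced by the four-point condition are consumed.

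Uniqueness and the star characterization then follow by reading combinatorial data off $D$. For a tree metric each quartet split $(ij|kl)$ is detected as the strictly smallest of the three sums $D_{ij}+D_{kl}$ (a tie signalling a degenerate quartet), so the full set of quartets, and hence the tree topology, is determined by $D$; the internal edge weights are recovered as $e=\tfrac12\big((D_{ik}+D_{jl})-(D_{ij}+D_{kl})\big)$ and the pendant weights by the tripod formula, giving a unique $(T,w)$. For the remaining direction of the star claim, suppose $D_{ij}+D_{kl}=D_{ik}+D_{jl}=D_{il}+D_{jk}$ for all four-tuples; then no quartet has a nondegenerate split, so $T$ has no internal edges and is a star. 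Setting $a_i=\tfrac12(D_{ij}+D_{ik}-D_{jk})$, the equal-sums hypothesis shows $a_i$ is independent of the chosen $j,k$ and that $D_{ij}=a_i+a_j$, exhibiting $D$ as the star-tree metric with pendant weights $a_i$.
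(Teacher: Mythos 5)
A point of comparison first: the paper does not prove this theorem at all --- it quotes it as the classical Four Point Condition, citing \cite[Thm~2.36]{ASCB} --- so there is no in-paper proof to match your attempt against, and it must be judged on its own. Your \emph{only if} direction is complete and correct: the spanned quartet computation $D_{ik}+D_{jl}=D_{il}+D_{jk}=D_{ij}+D_{kl}+2e$ with $e\geq 0$ gives both the four-point condition and the easy half of the star claim. Your use of the degenerate tuple $(i,j,k,k)$ to extract the triangle inequality matches the paper's convention (it explicitly notes that non-distinct indices supply the metric condition), and the converse half of the star characterization --- showing $a_i=\tfrac12(D_{ij}+D_{ik}-D_{jk})$ is independent of $j,k$ and that $D_{ij}=a_i+a_j$ --- is a correct and self-contained argument.

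The gap is in the \emph{if} direction, which is the actual content of the theorem (Buneman's theorem). Your inductive grafting step is announced rather than executed: you say you \emph{would} show that the medians determined by $\tfrac12(D_{1j}+D_{1n}-D_{jn})$ are all consistent with a single attachment point $p$ of $T'$, and that the ``delicate bookkeeping'' of verifying $d_T(i,n)=D_{in}$ for every $i$ is where the four-point equalities get consumed --- but none of that bookkeeping is carried out. Concretely, what is missing is: (a) a definition of $p$ (e.g.\ via a leaf $j^*$ extremizing the Gromov product, so that $p$ lies on the path from $1$ to $j^*$ at distance $\tfrac12(D_{1j^*}+D_{1n}-D_{j^*n})$ from $1$); (b) a proof, from the four-point inequalities on tuples $\{1,j,k,n\}$ and $\{i,j,k,n\}$, that the resulting attachment point and pendant length do not depend on the choices made; and (c) the verification of all $n-1$ new distances. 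Since this is precisely the nontrivial implication, the proposal as written is a correct plan --- the standard induction --- but not yet a proof. Two smaller issues in the last paragraph: the formula $e=\tfrac12\bigl((D_{ik}+D_{jl})-(D_{ij}+D_{kl})\bigr)$ recovers the length of the whole internal \emph{path} of the quartet, not of a single edge, so to pin down individual internal edge weights you must choose, for each internal edge $uv$, a quartet whose four leaves lie in four distinct components of $T$ with $u,v$ removed; and the uniqueness claim needs the standard caveat that it holds among trees with no degree-two vertices and strictly positive internal edge weights, since a zero-weight internal edge can be contracted without changing the metric.
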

\noindent In Section~\ref{s:TropicalSecantVarieties} we will view this cone as the nonnegative points in the
tropicalization of a toric variety.
%
Next, we define mixtures of dissimilarity maps:
\begin{definition}
  Let $D, \HH$ be two dissimilarity maps. We define the
  \emph{tropical mixture} of $D$ and $\HH$ as $(D\oplus \HH)_{ij}=\max\{D_{ij},
  \HH_{ij}\}=D_{ij} \oplus \HH_{ij}$ for all $i,j\in [n]$. This
  generalizes to mixtures of any number of dissimilarity maps in the
  natural way. 

\end{definition}
To simplify notation, throughout the paper we refer to this construction
as the mixture of $D$ and $\HH$.
 From the definition it is straightforward to see that mixtures of
  metrics are metrics themselves. Since the Four point condition with
  non distinct indices provides the metric condition, we only need to
  check tuples of four \emph{distinct} indices.



  We define the \emph{mixture map} as the function $\phi\colon
  \text{\{tree metrics\}}^2 \to \text{\{metrics\}}$ sending $(D,\HH)
  \mapsto D\oplus \HH$. Since $ \lambda (D \oplus \HH)= (\lambda D)
  \oplus (\lambda \HH)$ for $\lambda \in \RR_+$, it follows that the
  image of $\phi$ is also a pointed cone inside
  $\RR_+^{\binom{n}{2}}$.  The commutativity of the mixture map can be
  interpreted by a natural action of the group $\ZZ_2$ on pairs of
  tree metrics, sending a pair $(D,\HH)$ to $(\HH,D)$. The fibers of
  the mixture map will be a polyhedral complex that is closed under
  this action, mapping cells to cells. The fiber of $\phi$ over a
  generic point would be pure, of dimension two with at least two
  distinct orbits.  We will see several examples of this behavior
  later on.

  We now focus our attention on the restriction of $\phi$ to
 star tree metrics:
\[
\phi\colon \text{\{star tree metrics\}} \times \text{\{star tree metrics\}} \to
\text{\{metrics\}}.
\]
The image of this restriction is also a pointed cone. The main goal of
this paper is to characterize the set $\im \,\phi \cap \text{\{tree
  metrics\}}$. Along the way, we determine the fibers of $\phi$. Since
$D=D\oplus D$, we already know that $\text{\{star tree metrics\}}
\subset \im\, \phi$. Thus, we need only study when a non-star tree
metric belongs to the image of $\phi$. We now present the desired
characterization. Our building blocks with be the trees on four and
five taxa.



\begin{lemma}\label{lm:n=4}
  Assume $n=4$ and let $T$ be the quartet $(ij|kl)$ with edge weights
 $e_i, e_j, e_k, e_l$ and $g$ as in Figure~\ref{sf:mainThma}. 
  Then, $T$ is a mixture of two star tree metrics \emph{if and only if}
  $e_i,e_j\geq g$ or $e_k,e_l\geq g$.
  Moreover for each pair of inequalities, the fiber over $T$ is a
  polyhedral complex in $\RR^8$ whose maximal cells are polyhedra of
  dimension at most two. Each maximal cell is affinely isomorphic to
  either a product of (degenerate) intervals, a (degenerate) trapezoid
  or a (degenerate) triangle in $\RR^2$, and there are either eight or
  sixteen of them. The fibers are closed under
  the action of $\ZZ_2$ on pairs of star tree metrics.  
\end{lemma}
%
%
\noindent Notice that the fibers over boundary points where some equality holds
consist of degenerate star trees, 
because some edges have weight zero. 

It is interesting to compare the result of the previous lemma with the
analogous one for \emph{convex
  mixtures} of two star trees on four taxa \cite[Prop~2.2]{MMS}. The tree
metrics that are convex mixture of two star tree metrics are exactly the
star trees and the quartet trees where the internal branch length is
shorter than the sum of the weights of \emph{any} pair of
non-adjacent edges.

\begin{lemma}\label{lm:n=5}
  Assume $n=5$ and let $T$ be a non-star weighted tree on five
  taxa. Denote by $e_t$ the weight of the edge attached to the
  leaf~$t\in [n]$.  Then, $T$ is a mixture of two star tree metrics
  \emph{if and only if} $T$ has \emph{only one internal edge} (labeled
  $g$), and $e_i,e_j\geq g$, where $i,j$ are the leaves of the unique
  cherry attached to one endpoint of the internal edge (see
  Figure~\ref{sf:mainThmb}).
  The fiber of $\phi$ over such a tree $T$ is a polyhedral complex of
  dimension at most two with eight maximal cells, that is closed under
  the natural symmetry action of $\ZZ_2$.
\end{lemma}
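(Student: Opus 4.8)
The plan is to reduce to the four-taxa case (Lemma~\ref{lm:n=4}) by restriction, and then to carry out the gluing that four-taxa data alone cannot detect. First I would record the elementary but crucial fact that the restriction of a star tree metric on five taxa to any $4$-subset $S\subseteq[5]$ is again a star tree metric; consequently, if $T$ is a mixture $D\oplus\HH$ of two star tree metrics, then for every such $S$ the restriction $(D\oplus\HH)|_S=d_T|_S$ is a mixture of two star tree metrics on four taxa, and moreover these restricted decompositions all arise from the \emph{same} global pair $(D,\HH)$. Writing $a=(a_1,\dots,a_5)$ and $b=(b_1,\dots,b_5)$ for the edge weights of the two stars, so that $(D\oplus\HH)_{st}=\max\{a_s+a_t,\,b_s+b_t\}$, I would reformulate the problem as follows: $d_T=D\oplus\HH$ holds if and only if the two stars underestimate $d_T$ (that is $a_s+a_t\le d_T(s,t)$ and $b_s+b_t\le d_T(s,t)$ for all pairs), and their tight sets $A=\{(s,t):a_s+a_t=d_T(s,t)\}$ and $B=\{(s,t):b_s+b_t=d_T(s,t)\}$ together cover all $\binom{5}{2}$ pairs.

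For necessity I would first examine the possible topologies. A non-star tree on five taxa has either one or two internal edges. For the one-internal-edge topology with cherry $\{i,j\}$ and remaining leaves $k,l,m$, the three quartets $(ij|kl)$, $(ij|km)$, $(ij|lm)$ each carry the internal edge $g$, while $\{i,k,l,m\}$ and $\{j,k,l,m\}$ induce stars; applying Lemma~\ref{lm:n=4} to the three genuine quartets gives the disjunctions ``$e_i,e_j>g$ or $e_k,e_l>g$'' together with its two analogues, which combine to ``$e_i,e_j>g$'' or ``$e_k,e_l,e_m>g$''. The delicate point, and the main obstacle, is that these four-taxa conditions are necessary but far from sufficient: they cannot by themselves exclude the two-internal-edge (caterpillar) topology, nor the spurious alternative $e_k,e_l,e_m>g$, since in both situations every individual quartet separately satisfies Lemma~\ref{lm:n=4}. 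To rule these out I would exploit the fact that the quartet decompositions must \emph{glue}, i.e.\ they must be the restrictions of one global pair $(a,b)$: using the explicit four-taxa fiber description of Lemma~\ref{lm:n=4}, I would track the shared weights $a_s,b_s$ across overlapping quartets and derive a contradiction with the underestimation inequalities on the pairs whose connecting path traverses both internal edges (in the caterpillar case), respectively on the pairs inside the triangle $\{k,l,m\}$, where forcing a star to be tight on all three pairs pins down $a_k=e_k$, $a_l=e_l$, $a_m=e_m$ and then clashes with the cherry pair $\{i,j\}$ (in the spurious case). This forces a single internal edge and the cherry inequality $e_i,e_j>g$, completing necessity.

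For sufficiency and the fiber, I would assume $e_i,e_j>g$ and solve the system $\max\{a_s+a_t,b_s+b_t\}=d_T(s,t)$ directly. Repeating the four-taxa analysis on the quartet $(ij|kl)$ produces four distinct ways of distributing the tight pairs between the two stars, each a two-parameter polygon in the weights; I would then show that extending any such solution to the fifth leaf $m$ is rigid, the weights $a_m,b_m$ being determined by tightness and underestimation on the pairs $(i,m),(j,m),(k,m),(l,m)$, so that no further freedom is introduced and each family remains a $2$-dimensional polygon in $\RR^{10}_+$. The $\Sn_2$-action $(D,\HH)\mapsto(\HH,D)$ is free here, since $D=\HH$ would make $d_T=D\oplus D=D$ a star, contrary to $T$ being non-star; hence the solutions organize into exactly four disjoint $\Sn_2$-orbits of $2$-dimensional polygons, as claimed. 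Finally I would check that, under $e_i,e_j>g$, all weights produced are strictly positive, so the solutions are genuine star tree metrics rather than degenerate ones.
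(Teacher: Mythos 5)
Your proposal follows the same skeleton as the paper's proof: restrict $T$ and the putative decomposition to its quartets, invoke Lemma~\ref{lm:n=4} together with its explicit fiber description, and show that the quartet decompositions must glue to a single global pair, which simultaneously rules out the caterpillar topology and the alternative $e_k,e_l,e_m>g$, and produces the fiber. The paper executes the gluing by exhaustively matching the eight four-taxa cases (1.1--1.4, 2.1--2.4) across the quartets $(12|34)$, $(12|35)$, $(12|45)$. This is long, but it yields as a by-product that all three quartets lie in the \emph{same} case with the \emph{same} parameters $u,w$ (Remark~\ref{rk:keyn=5}), a fact the paper reuses in the proof of Theorem~\ref{thm:mainResult}; your ``rigidity of the extension to the fifth leaf'' is the same phenomenon. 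Your underestimation/tight-set reformulation is a genuinely tidier way to organize the necessity half, and the $\Sn_2$-freeness observation and the count of four $2$-dimensional families match the paper.

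The place to be careful is the one step you present as automatic: the covering condition does \emph{not} by itself force a single star to be tight on all three pairs of the triangle $\{k,l,m\}$; a priori the three tight pairs could split $2$--$1$ between the two stars, and excluding that split is where the actual work sits. It can be done: if $a$ is tight on $(k,l)$ and $(k,m)$ but not on $(l,m)$, then $a_k=e_k+\epsilon$, $a_l=e_l-\epsilon$, $a_m=e_m-\epsilon$ with $\epsilon>0$; underestimation on $(i,k)$ gives $a_i\le e_i+g-\epsilon$, so $a$ cannot be tight on any of $(i,l),(i,m),(j,l),(j,m)$; hence $b$ must cover all four, which forces $b_i\ge e_i+g$ and $b_j\ge e_j+g$ and violates $b_i+b_j\le d_T(i,j)=e_i+e_j$. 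Only after this does your pinning $a_k=e_k$, $a_l=e_l$, $a_m=e_m$ follow, and the final clash is then with \emph{positivity} at the cherry rather than with the cherry equation alone: covering the cross pairs forces (say) $a_i=e_i+g$ and $b_j\ge e_j+g$, so the cherry underestimation for each star gives $a_j\le e_j-g$ and $b_i\le e_i-g$, and positivity of $a_j,b_i$ yields $e_i,e_j>g$. The same caveat applies to the caterpillar, which you dispose of in one clause; the contradiction there is real but again requires the forced-weight bookkeeping, and in fact it tends to surface on pairs through the middle leaf rather than only on the pairs crossing both internal edges. In short: same strategy as the paper and a viable, arguably cleaner plan, but the two assertions carrying essentially all of the paper's several pages of case analysis are stated rather than proved; they happen to be true, so the plan goes through once those arguments are supplied.
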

In particular, from the lemma we know that no trivalent tree metric is
a mixture of two star tree metrics.
The characterization for arbitrary number of taxa follows from the previous two
lemmas.
\begin{theorem}\label{thm:mainResult}
  Let $T$ be a non-star tree metric. Then $T$ is a
  mixture of two star tree metrics \emph{if and only if} one of the
  following conditions hold:
  \begin{enumerate}
  \item $n=4$, $T$ is the quartet $(ij|kl)$ and $e_i,e_j\geq g$ or
    $e_k,e_l\geq g$, where $g$ denotes the weight of the internal edge
    (see Figure~\ref{sf:mainThma});
  \item $n\geq 5$, $T$ has only one internal edge with weight $g$, and
    if $I$ and $J$ are a partition of $[n]$ into labels of leaves
    attached to the two extremal points of the internal edge of $T$,
    and we assume $|I|\leq |J|$, we have two cases:
  \begin{enumerate}
  \item $|I|=2$ and $e_{i_1}, e_{i_2}\geq g$ (see Figure~\ref{sf:mainThmb});
  \item $|I|\geq 3$, and $e_i\geq g$ for all $i\in [n]$ (see
    Figure~\ref{sf:mainThmc}).
  \end{enumerate}
\end{enumerate}
The fibers of $\phi$ over these trees are polyhedral complexes with
eight or sixteen  maximal cells of dimension at
most two. These complexes are closed under the action of $\ZZ_2$. Over
generic points, these fibers are pure complexes of dimension exactly
two. 
\end{theorem}
  \begin{figure}[ht]
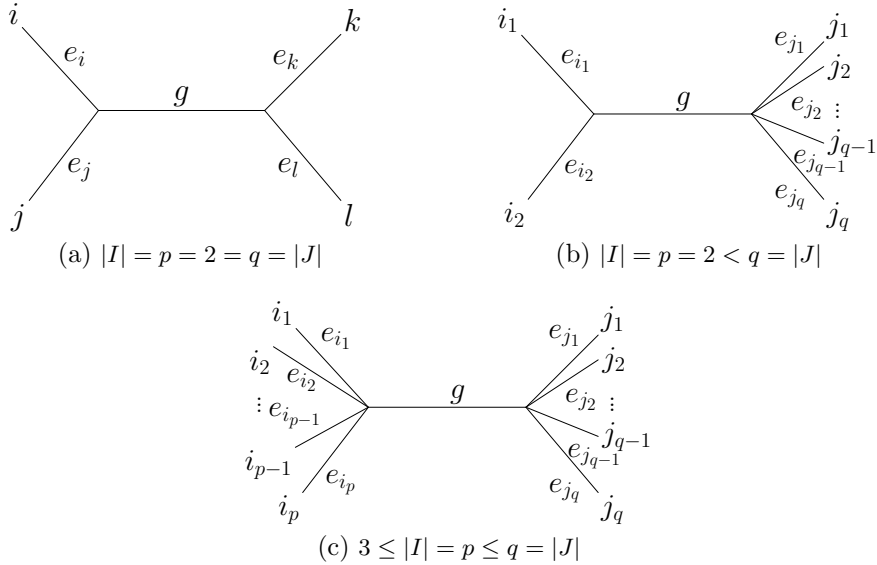

    \centering
    \subfloat[\mbox{$|I|=p=2 =q=|J|$}]{%
\includegraphics[scale=0.58]{angie.16}
\label{sf:mainThma}
}
\qquad \qquad 
 \subfloat[$|I|=p=2 <q=|J|$]{
\includegraphics[scale=0.55]{angie.15}
 \label{sf:mainThmb} 
}
\qquad
\subfloat[$3\leq |I|=p \leq q=|J|$]{
\includegraphics[scale=0.55]{angie.14}
\label{sf:mainThmc}
}
\caption{Tree topologies of mixtures of two star tree metrics.}
  \end{figure}

\section{Proof of Lemmas~\ref{lm:n=4},~\ref{lm:n=5}, and of
  Theorem~\ref{thm:mainResult} }\label{s:proofs}
Before presenting the proofs of the two main lemmas, we need an
algebraic characterization of the topology of three metrics that equal
$D\oplus \HH$ in terms of the entries of the star tree metrics $D$ and
$\HH$. Since the topology of star trees is highly symmetric we need
only treat the case of fibers of the mixture map over the quartet
$(12|34)$.  The general case follows by relabeling. 

Following the proof technique in~\cite{MatSt}, our description involves
relative differences among the entries of both metrics. We denote by
$a,d,c$ and $e$ the weights of the edges in $D$ adjacent to leaves 1
through 4. With the same convention, $\aaa, \dd,\cc$ and $\ee$ are used for the edges
in $\HH$. We label the middle edge of the quartet $(12|34)$ by $g$
(see Figure~\ref{sf:mainThma}).
Note that since the image of $\phi$ and the trees with a fixed
topology are cones, then our characterization must be 
invariant under multiplication by positive scalars. 

From the edge weights in the star tree metrics $D$ and $\HH$ we obtain
two $4\times 4$ nonnegative symmetric matrices.  To simplify notation,
we only show the coefficients of these matrices above the diagonal:
\[
D:=
\left(\begin{array}{cccc}
  0 & a+d & a+c & a+e \\
 & 0 & d+c & d+e \\
& & 0 & c+e\\
& & & 0 
\end{array}
\right)
\; ; \;
\HH:=\left(\begin{array}{cccc}
  0 & \aaa+\dd & \aaa+ \cc & \aaa+ \ee \\
 & 0 & \dd+ \cc & \dd+ \ee \\
& & 0 & \cc+\ee\\
& & & 0 
\end{array}
\right).
\]
We consider six new indeterminates $s,t,x,y,u$ and $w$, which  express
the relative differences between the matrices $D$ and $\HH$.

  \begin{minipage}{0.35\linewidth}
\begin{equation}\left\{
\begin{aligned}
  \aaa  + \cc & = a + c + s,\\
  \dd  + \ee & =  d +e + t,
\end{aligned}    \right.
\label{eq:3}\end{equation}
\end{minipage},
  \begin{minipage}{0.3\linewidth}
\begin{equation*}
\left\{
\begin{aligned}
  \aaa + \ee & =  a  +e +x,\\
  \dd  + \cc & =  d  + c + y,
\end{aligned}\right.
\end{equation*}
\end{minipage},
  \begin{minipage}{0.3\linewidth}
\begin{equation*}\left\{
\begin{aligned}
\aaa + \dd & =  a +d + u, \\
  \cc + \ee & =  c +e + w.
\end{aligned} \right.
\end{equation*}\end{minipage}
From \eqref{eq:3} we see that these new variables satisfy two
linear equations: 
\[
 s+t=x+y=u+w.
\]
Denote $s^+=\max\{s,0\}$ and similarly for the other five new
variables. Observe that $T_{13}=\max\{D_{13}, \HH_{13}\} =
D_{13}+s^+$, and similarly for the other variables. 
%
 The next result will be crucial in the proof of
Lemma~\ref{lm:n=4}.

\begin{proposition}\label{pr:charact(12|34)}
  With the above notation,  $T$ is the quartet
  $(12|34)$ \emph{if and only if}
$  u^+ + w^+ < s^+ + t^+ = x^+ + y^+$.
\end{proposition}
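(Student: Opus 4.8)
The plan is to translate the combinatorial condition ``$T$ is the quartet $(12|34)$'' into the inequality among the six ``$+$'' quantities by invoking the Four Point Condition (Theorem~\ref{thm:4ptCondition}). Recall from \eqref{eq:1} that the three pairwise sums $T_{12}+T_{34}$, $T_{13}+T_{24}$, $T_{14}+T_{23}$ equal the common quantity $(a+c+d+e)$ plus $u^++w^+$, $s^++t^+$, and $x^++y^+$ respectively. Hence comparing these three sums is the same as comparing the three numbers $u^++w^+$, $s^++t^+$, $x^++y^+$. By the Four Point Condition, $T$ is a tree metric exactly when the maximum of these three sums is attained twice, and $T$ is the specific quartet $(12|34)$ precisely when the \emph{strictly smallest} sum is $T_{12}+T_{34}$ while the other two sums are equal and larger. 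Translating, this says $u^++w^+ < s^++t^+ = x^++y^+$, which is exactly the claimed characterization.

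The first step I would carry out is to justify that $\max\{D_{ij},\HH_{ij}\} = D_{ij}+s^+$ (and its analogues), so that formulas \eqref{eq:1} indeed hold; this is immediate from $\max\{D_{ij},\HH_{ij}\}=D_{ij}+\max\{\HH_{ij}-D_{ij},0\}$ together with the definitions in \eqref{eq:3}. Next I would observe that a quartet with a nonzero internal edge $b>0$ has, as its topology, a strict split: for $(12|34)$ we have $T_{12}+T_{34}$ strictly less than $T_{13}+T_{24}=T_{14}+T_{23}$, where the internal edge length is recovered as $b=\tfrac12\big((T_{13}+T_{24})-(T_{12}+T_{34})\big)>0$. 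The equality of the two larger sums is forced by the Four Point Condition (maximum attained twice), and the strict inequality is what distinguishes the topology $(12|34)$ from a star (where all three sums coincide) and from the two competing quartets $(13|24)$, $(14|23)$ (where a different sum would be the smallest).

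The main obstacle, such as it is, will be the bookkeeping that matches each pairwise split to the correct pair of ``$+$'' variables and checking that the direction of the strict inequality picks out $(12|34)$ rather than one of the other two quartets. Concretely, I must verify that the smallest of the three sums corresponds to the split realized by $T$, i.e.\ that $T$ being $(12|34)$ forces $u^++w^+$ to be the unique smallest value; this is the content of reading off which pair $\{i,j\},\{k,l\}$ has the smaller distance sum in a quartet. Once the correspondence ``$\{12,34\}\leftrightarrow u,w$'', ``$\{13,24\}\leftrightarrow s,t$'', ``$\{14,23\}\leftrightarrow x,y$'' is fixed via \eqref{eq:1}, both directions of the equivalence follow directly from Theorem~\ref{thm:4ptCondition}: forward, a quartet $(12|34)$ has its $12|34$-sum strictly below the other two equal sums; conversely, the displayed inequality forces the three sums in \eqref{eq:1} to have their maximum attained exactly twice (at the $13|24$ and $14|23$ positions), so $T$ is a tree metric by the Four Point Condition, and the strict inequality on $u^++w^+$ identifies the topology as $(12|34)$. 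I would close by noting that the characterization is manifestly invariant under scaling all edge weights by $\lambda\in\RR_+$, consistent with the cone structure remarked upon earlier.
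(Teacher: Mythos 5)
Your proposal is correct and follows exactly the paper's route: the paper's entire proof is the one-line remark that the claim follows immediately from the Four Point Condition, and your argument simply fills in the routine details (the common term $a+c+d+e$ in the three sums of \eqref{eq:1}, the identification of the strictly smallest sum with the split $12|34$, and the positivity of the internal edge). Nothing further is needed.
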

\begin{proof}
  Follow immediately from Theorem~\ref{thm:4ptCondition}. 
\end{proof}

\subsection{Proof of Lemma~\ref{lm:n=4}.}
  
Our strategy is elementary and it is based on a careful case by case
analysis. We let $T$ be the quartet $(12|34)$. We assume $T=D\oplus
\HH$, and we find necessary and sufficient conditions on the weights
of $T$ for this equality to hold. Our unique restrictions are given by
requiring all values of weights to be 
nonnegative. 
 We show that these conditions suffice to explicitly construct $D$ and
 $\HH$ from $T$. Proposition~\ref{pr:charact(12|34)} will be our main
 tool throughout the proof.

 Since the mixture map is symmetric on $D$ and $\HH$, without loss of
 generality we may assume $s\geq 0$. Thus, we need to analyze two
 cases according to the sign of $t$.

 If $t\geq 0$, we need $s+t=x+y$ and $s+t = x^+ + y^+$. Thus $x,y\geq
 0$. Likewise $s+t=u+w$ but $u+w \leq u^++ w^+ <s+t$, a
 contradiction. Hence $t<0$. By switching the roles of $s$ and $t$ we
 also conclude that $s$ and $t$ are both nonzero. Without loss of
 generality, we may assume $s>0$ and $t<0$.

 By the symmetry on $(12|34)$, we observe that \emph{both} pairs
 $(s,t), (x,y)$ have coordinates of \emph{opposite} sign.
  In addition, since $x+y=s+t<s$ and $s=x^++ y^+$, then we must have either
  $x=s,y=t$ or $x=t,y=s$. By analyzing  \eqref{eq:3} we conclude that either:
\[
s=x > 0,\, t=y<0 \; \Rightarrow\; 
 \aaa=a+ s-\frac{w}{2},\, \cc=c+\frac{w}{2},\, \dd=d+ u+\frac{w}{2}-s,\,
\ee=e+\frac{w}{2},
\]
or
\[
s=y> 0,\, t=x<0 \; \Rightarrow\; \aaa=a+\frac{u}{2},\,
\cc=c+ s-\frac{u}{2},\,\dd=d+\frac{u}{2},\,  \ee=e+ x-\frac{u}{2}.
\]
Note that these two cases are symmetric with respect to the order two
permutation that is the product of the transpositions $(1\, 2)$ and $(3\,
4)$.

We now consider the sign patterns of $u$ and $w$. Assume $u,w$ have
opposite sign. Then from $u+w=s+t$ we conclude $u^++w^+<s$ \emph{if
  and only if} ($0\leq u<s$ and $w=s+t-u<0$) \underline{or} ($0\leq
w<s$ and $u=s+t-w<0$). In case both $u$ and $w$ have the same sign,
we have no restrictions other than $u+w=s+t$.

Using Theorem~\ref{thm:4ptCondition}, we can compute the weights of the
edges of $T$ in terms of the variables $a,c,d,e,s,t,x^+,y^+, u^+$ and $w^+$, following the labeling
of Figure~\ref{sf:mainThma} 
with $i=1, j=2, k=3, l=4$. 
\small{
  \begin{equation}
\left\{
    \begin{aligned}
     & g = \frac{s-(u^++w^+)}{2}, e_1= a+\frac{x^++u^+}{2} , e_2=
      d+\frac{y^+-s+u^+}{2} , \\&e_3= c+\frac{y^++w^+}{2}, e_4=
      e+\frac{x^+-s+w^+}{2}.
    \end{aligned}
\right.\label{eq:edges}
\end{equation}
}
\normalsize

 \noindent Next, we compute each indeterminate in \eqref{eq:edges} according to our two sign patters for  $x$ and $y$. Case 1 corresponds to $s=x >
 0$ and $t=y<0$, whereas Case 2 gives $s=y > 0$ and $t=x<0$.  
We subdivide these two cases according to further sign constrains: $s>u\geq 0$
and $w\leq 0$ (Cases 1.1 and 2.1), $s>w\geq 0$ and $u\leq 0$ (Cases 1.2 and
2.2) or $u,w$ with the same sign (nonnegative in Cases 1.3 and 2.3 and
nonpositive in Cases 1.4 and 2.4).  In this way, we get expressions for
all our indeterminates and we obtain necessary properties they must
have.
\begin{remark} \label{rk:symmetriesCases} Notice that Cases 1 and 2
  are symmetric. More precisely, they are equivalent if we change the
  labels for leaves $1$ and $2$ by $3$ and $4$ respectively and we
  also change the variables $(a,d,u,w,x,y)$ by $(c,e,w,u,y,x)$, while
  leaving $s$ and $t$ fixed.  One needs to make these replacements
  between suitable pairs: 1.1 $\&$ 2.2, 1.2 $\&$ 2.1, 1.3 $\&$ 2.3 and
  1.4 $\&$ 2.4 respectively.   Therefore, it suffices to  study
  Cases 1.1 through 1.4. This correspondence will be crucial for
  the proof of Lemma~\ref{lm:n=5}. 
\end{remark}

\begin{itemize}

\item {\textbf{Case 1.1:} }
Suppose $s>u\geq 0$ and $w\leq 0$, then
\[
g  =  \frac{s-u}{2},\quad
e_1  =  a+\frac{s+u}{2} ,\quad
e_2  =  d-\frac{s-u}{2} ,\quad
e_3  =  c,\quad
e_4  =  e.
\]
Since $a\geq 0,u\geq 0$ the previous equations yield $e_1\geq
g$. Similarly, by requiring all variables $\aaa,\dd,\ee,\cc$ and $e_k$
to be nonnegative we
conclude that 
$e_1,e_2\geq g$, $d\geq s-u-w/2$ and
$c,e\geq -w/2$.

\item {\textbf{Case 1.2:}} Suppose $s>w\geq 0$ and $u\leq 0$, then
\[
g = \frac{s-w}{2}, \quad
e_1 = a+\frac{s}{2} , \quad
e_2 = d-\frac{s}{2}  , \quad
e_3 = c+ \frac{w}{2} , \quad
e_4 = e+ \frac{w}{2}. 
\]
In this case, from $a\geq 0$ and $w\geq 0$ we also have $e_1\geq
g$. Analogously, from $u\leq 0$ and $0 \leq w<s$ we obtain additional
necessary conditions: $d\geq s-u-w/2$ and $e_2=d-s/2\geq g-u\geq g$.

\item {\textbf{Case 1.3:}} Suppose $u,w\geq 0$, then
\[
g = \frac{s-(u+w)}{2},\;
e_1 = a+\frac{s+u}{2}  ,\;
e_2 = d-\frac{s-u}{2} ,\;
e_3 = c+ \frac{w}{2} ,\;
e_4 = e+ \frac{w}{2}.
\]
As before, we have $e_1\geq g$, and we obtain the extra conditions
$d\geq s-u-w/2$,
$e_2
\geq s-u-w/2-(s-u)/2=g$.

\item {\textbf{Case 1.4:}} Suppose $u,w \leq 0$, then
\[
g = \frac{s}{2} , \quad
e_1 = a+\frac{s}{2} , \quad
e_2 = d-\frac{s}{2}  , \quad
e_3 = c , \quad
e_4 = e.
\]
Here we have $e_1\geq g$,  $c,e \geq -w/2$ and 
$d\geq s-u-w/2$, thus
$e_2=d-s/2 \geq 
s/2-u-w/2 
\geq g$.
%
\end{itemize}

\medskip

From the previous analysis, we conclude that $s=x, t=y$ implies
$e_1,e_2\geq g$ and $d\geq s-u-w/2$.  Changing variables as stated in
Remark~\ref{rk:symmetriesCases}, we get analogous results for Cases
2.1 through 2.4. In particular, we obtain $e_3,e_4\geq g$ and $e\geq
s-w-u/2$. 
Hence, if $(12|34)$ is a mixture of two 
star tree
metrics we conclude $e_1,e_2\geq g$ or $e_3,e_4\geq g$.  \medskip

For the converse, we assume that $T$ is the quartet $(12|34)$ and that
it satisfies $e_1,e_2\geq g$ or $e_3,e_4\geq g$. Our goal is to
construct all possible pairs $D$, $\HH$ giving $T=D\oplus \HH$.  From
the ``if direction'' we know that such pairs $(D,\HH)$ belong to a
polyhedral complex of dimension at most 2. Each cell is parameterized
by the variables $u,w$. 

Assume that the condition $e_3,e_4\geq g$ \emph{does not} hold. The,  we
claim that Cases 1.1 through 1.4 are admissible \emph{if and only if}
$e_1,e_2\geq g$. We already know the ``only if direction'' from the
previous discussion. We now prove the converse: 


\begin{itemize}
\item \textbf{Case 1.1:} Assume $e_1,e_2\geq g$. We want to
  determine the values of $a,c,d,e$ and $\aaa,\cc,\dd,\ee$ so that
  $T=D\oplus \HH$.  From the previous analysis we easily obtain
  $c=e_3$, $d=e_2+g$ and $e=e_4$.  On the other hand, we have
  $s=u+2g$, $a=e_1-{(s+u)}/{2}=e_1-g-u$, $\aaa=e_1+g-w/2$,
  $\cc= e_3+w/2$, $\dd=e_2-g+w/2$ and $\ee=
  e_4+w/2$.  The only conditions on $u,w$ are determined by
  the nonnegativity of all weights in $D$ and $\HH$, namely, $u,w$ are
  free parameters satisfying $0\geq w \geq -2\min\{e_2-g, e_3,e_4\}$ and
  $e_1-g\geq u\geq 0$. The sign patterns for $s,t$ hold
  automatically by construction.
%
%
%
%
Hence, our necessary and sufficient conditions are $e_1,e_2\leq g$.

\item \textbf{Case 1.2:} As in the previous case, if we assume
  $e_1,e_2\leq g$ we get  $s=2g+w$,
 $a=e_1-g-w/2$,  $c=e_3-w/2$, $d=e_2+g+w/2$, $e=e_4-w/2$,
   $\aaa=e_1+g$ $\cc=e_3$,  $\dd=e_2-g+u$ and $\ee=e_4$, where $u$ and $w$
  are two free parameters satisfying $0\geq u\geq g-e_2$ and
  $2\min\{e_3,e_4,e_1-g\}\geq w\geq 0$.
%
%
%
%
%
Again, our necessary and sufficient conditions are
$e_1,e_2\geq g$ as well.

\item \textbf{Case 1.3:} Suppose $e_1,e_2\geq g$. From scratch we
  have $\cc=e_3$ and $\ee=e_4$ . We let $u,w\geq 0$ be two free
  parameters, so $s=2g+u+w$, $a=e_1-g-u-w/2$,
  $c=e_3-w/2$, $d=e_2+g+w/2$, $e=e_4-w/2$,
  $\aaa=e_1+g$ and $\dd=e_2-g$.
  In particular, the only restrictions over
  $u,w$ are $0\leq w\leq 2\min\{e_3,e_4,e_1-g\}$ and $0\leq
  u\leq e_1-g-w/2$.




\item {\textbf{Case 1.4:}} Assume $e_1,e_2\geq g$. Then $s=2g$,
  $a=e_1-g$, $c=e_3$, $d=e_2+g$ and $e=e_4$. Our free parameters are
  $u,w\leq 0$. We get $\aaa=e_1+g-w/2$, $\cc=e_3 +
  w/2$, $\dd=e_2-g+u+w/2$ and
  $\ee=e_4+w/2$. Therefore, $u,w$ must satisfy $0\geq w
  \geq-2\min\{e_3,e_4,e_2-g\}$ and $0\geq u\geq g-e_2-w/2$.
\end{itemize}

\medskip

  By the symmetries mentioned in Remark~\ref{rk:symmetriesCases}, we
  have similar results for Cases 2.1 through 2.4. The necessary and
  sufficient conditions are $e_3,e_4\geq g$ in case $e_1,e_2\geq g$
  does not hold. We also obtain $u, w$ as free parameters and their
  linear restrictions are translations of the restrictions from the
  corresponding previous four cases.

  Therefore, we have three possible scenarios for the
  fibers 
  to be nonempty:
\begin{enumerate}
\item $e_1,e_2 \geq g$ does \emph{not} hold. Then $e_3,e_4\geq g$ and
  the fiber of the mixture map is the union of the four families
  described by Cases 2.1 through 2.4, and their orbits under the
  action of $\ZZ_2$ (which interchanges $D$ and $\HH$). The fiber over
  such a point is a polyhedral complex with eight maximal cells (Cases
  2.1 through 2.4 and their $\ZZ_2$-orbits). Each cell
  is affinely isomorphic to a product of intervals, a trapezoid or a
  triangle in $\RR^2$.  If $e_1,e_2>g$, the fiber over this point is a
  pure complex of dimension two, i.e.\ the maximal cells have dimension two.
\item $e_3,e_4\geq g$ does \emph{not} hold. Then $e_1,e_2\geq g$ and the fiber
  of the mixture map is a polyhedral complex with eight maximal cells
 (Cases 1.1 through 1.4 and their $\ZZ_2$-orbits) of dimension at most
 two. 
\item $e_1,e_2,e_3,e_4\geq g$. Then, the fiber is a polyhedral complex
  with sixteen maximal cells defined by Cases 1.1--2.4 and
  their $\ZZ_2$-orbits. 
\end{enumerate}

As a reference for the proof of Lemma~\ref{lm:n=5}, we include the
description of all eight cases below. The free parameters are
$u,w$. Notice that in all cases, one of the star trees has at
least three of its edge weights independent of the values of $u$ and
$w$.
\begin{itemize}
\item \textbf{Case 1.1:} 
  $e_1-g\geq u\geq 0$ and $0 \geq w \geq -2\min\{e_2-g,e_3,e_4\}$,
  \[
  \left\{
    \begin{array}{llll}
      a  :=  e_1-g-u, &
      c  :=  e_3, &
   d :=  e_2+g, &
      e  :=  e_4, \\
      \bar{a}  :=  e_1+g-\frac{w}{2}, &
      \bar{c}  :=  e_3 +\frac{w}{2}, &
      \bar{d}  :=  e_2-g+\frac{w}{2}, &
      \bar{e}  :=  e_4 + \frac{w}{2}. 
    \end{array}
  \right.
  \]

\item \textbf{Case 1.2:} 
  $0\geq  u\geq g-e_2$ and $2\min\{e_1-g, e_3,e_4\}\geq w\geq 0$,
  \[
  \left\{
    \begin{array}{llll}
      a  :=  e_1-g-\frac{w}{2}, &
      c  :=  e_3-\frac{w}{2}, &
      d  :=  e_2+g+\frac{w}{2}, &
      e  :=  e_4-\frac{w}{2}, \\
      \bar{a}  :=  e_1+g, &
      \bar{c}  :=  e_3, &
      \bar{d}  :=  e_2-g+u, &
      \bar{e}  :=  e_4. 
    \end{array}
  \right.
  \]

\item {\textbf{Case 1.3:}} 
  $e_1-g-{w}/{2}\geq u\geq 0$ and $2\min\{e_1-g,e_3,e_4\}\geq w\geq 0$,
  \[
  \left\{
    \begin{array}{llll}
      a  :=  e_1-g-u-\frac{w}{2}, &
      c  :=  e_3-\frac{w}{2}, &
      d  :=  e_2+g+\frac{w}{2}, &
      e  :=  e_4-\frac{w}{2}, \\
      \bar{a}  :=  e_1+g, &
      \bar{c}  :=  e_3, &
      \bar{d}  :=  e_2-g, &
      \bar{e}  :=  e_4.
    \end{array}
  \right.
  \]

\item \textbf{Case 1.4:} 
  $0\geq u\geq g-e_2-{w}/{2}$ and $0\geq w\geq -2\min\{e_2-g,e_3,e_4\}$,
  \[
  \left\{\hspace{-0.2cm}
    \begin{array}{llll}
      a  :=  e_1-g, &\hspace{-0.1cm}
      c  :=  e_3, &\hspace{-0.1cm}
      d  :=  e_2+g, &\hspace{-0.1cm}
      e  :=  e_4, \\
      \bar{a}  :=  e_1+g-\frac{w}{2}, &\hspace{-0.1cm}
      \bar{c}  :=  e_3 + \frac{w}{2}, &\hspace{-0.1cm}
      \bar{d}  :=  e_2-g+ u+ \frac{w}{2} , &\hspace{-0.1cm}
      \bar{e}  :=  e_4 + \frac{w}{2}. 
    \end{array}\hspace{-0.35cm}
  \right.
  \]

\item \textbf{Case 2.1:} $ 2\min\{e_1,e_2,e_3-g\}\geq u\geq 0$ and
  $0\geq w\geq g-e_4$,
  \[
  \left\{
    \begin{array}{llll}
      a   :=  e_1-\frac{u}{2}, &
      c  :=  e_3-g-\frac{u}{2}, &
      d   :=  e_2-\frac{u}{2}, &
      e  :=  e_4+g+\frac{u}{2}, \\
      \bar{a}  :=  e_1, &
      \bar{c}  :=  e_3+g, &
      \bar{d}  :=  e_2, &
      \bar{e}  :=  e_4-g+w.
    \end{array}
  \right.
  \]

\item \textbf{Case 2.2:} 
  $0\geq u\geq -2\min\{e_1,e_2,e_4-g\}$ and $e_3-g\geq w\geq 0$,
  \[
  \left\{
    \begin{array}{llll}
      a  :=  e_1, &
      c  :=  e_3-g-w, &
      d  :=  e_2, &
      e  :=  e_4+g, \\
      \bar{a}  :=  e_1+\frac{u}{2}, &
      \bar{c}  :=  e_3 +g-\frac{u}{2}, &
      \bar{d}  :=  e_2+\frac{u}{2}, &
      \bar{e}  :=  e_4 -g+\frac{u}{2}. 
    \end{array}
  \right.
  \]

\item \textbf{Case 2.3:} 
  $2\min\{e_1,e_2,e_3-g\}\geq u\geq 0$ and $e_3-g-{u}/{2}\geq w\geq 0$,
  \[
  \left\{\hspace{-0.2cm}
    \begin{array}{llll}
      a  :=  e_1-\frac{u}{2}, &\hspace{-0.1cm}
      c  :=  e_3-g-\frac{u}{2}-w, &\hspace{-0.1cm}
      d  :=  e_2-\frac{u}{2}, &\hspace{-0.1cm}
      e  :=  e_4+g+ \frac{u}{2}, \\
      \bar{a}  :=  e_1, &\hspace{-0.1cm}
      \bar{c}  :=  e_3+g, &\hspace{-0.1cm}
      \bar{d}  :=  e_2, &\hspace{-0.1cm}
      \bar{e}  :=  e_4-g.
    \end{array}\hspace{-0.35cm}
  \right.
  \]

\item \textbf{Case 2.4:} 
  $0\geq u\geq -2\min\{e_1,e_2,e_4-g\}$ and $0\geq w\geq (g-e_4)-u/2$,
  \[
  \left\{
    \begin{array}{llll}
      a  :=  e_1, &
      c  :=  e_3-g, &
      d  :=  e_2, &
      e  :=  e_4 +g, \\
      \bar{a}  :=  e_1+ \frac{u}{2}, &
      \bar{c}  :=  e_3 + g- \frac{u}{2}, &
      \bar{d}  :=  e_2 + \frac{u}{2}, &
      \bar{e}  :=  e_4 -g + \frac{u}{2}+w. 
    \end{array}
  \right.
  \]
\end{itemize}

\subsection{Proof of Lemma~\ref{lm:n=5}.}
As in the proof of Lemma~\ref{lm:n=4}, our goal is to study necessary
and sufficient conditions under which a point has a nonempty fiber,
and also to describe these fibers. By symmetry, we need only consider
the two cases illustrated in Figure~\ref{fig:n=5Trees}. We treat each
topology separately, analyzing the possible quartets $(12|34),
(12|35), (12|45)$ on these trees.
Since we have a complete characterization of mixtures of two star tree
metrics on four taxa, we construct our candidates for the mixtures on
five taxa given each quartet on the trees. To finish, we glue our
partial star tree metrics together (whenever possible) to build our
candidates $D,\HH$ on five taxa. We show this by an exhaustive case by
case analysis for each quartet.


\begin{figure}[ht]
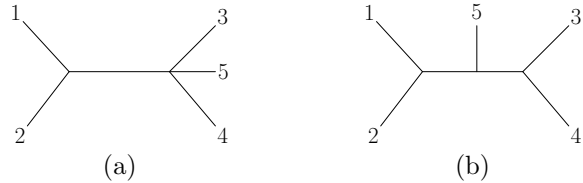

\centering
 \subfloat[]{
 \label{sf:ejemplob}
\includegraphics[scale=0.35]{angie.2}
 }
\qquad \qquad
\subfloat[]{
\label{sf:ejemploc}
\includegraphics[scale=0.35]{angie.3}
}
\caption{Non-star trees on five taxa, up to permutation of taxa.}
\label{fig:n=5Trees}
\end{figure}
\begin{proof}  [Proof for Figure~\ref{sf:ejemplob}]
  Let $T$ be as in Figure~\ref{sf:ejemplob}. As we said before, we
  have necessary and sufficient conditions for each quartet in $T$ to
  be a mixture of two star tree metrics. 
  Assume $T=D\oplus \HH$. Following the notation and proof-strategy of
  Lemma~\ref{lm:n=4}, we let $g$ be the weight of the unique internal
  edge in $T$ and $e_i$ the weight of the edge in $T$ adjacent to leaf
  $i$.  Similarly, we let $a,d,c,e,f$ (resp.\ $\aaa, \dd, \cc, \ee,
  \ff$) be the weights of the edges in $D$ (resp.\ $\HH$) adjacent to
  leaves 1 through 5, as illustrated on the right-most picture in
  Figure~\ref{fig:labelsEjemploc}. Since the  quartets $(12|34)$ and $(12|35)$
  involve leaves $1$ and $3$, we can always assume that
  $s>0$. However, for the quartet $(12|45)$ we need to consider
  \emph{both} signs for the difference
$(\aaa+\ff)-(a+f)$.
By considering the
restrictions to the three mentioned quartets we see that
the underlying tree  $T$ must satisfy the following conditions:
\[
e_1,e_2 \geq g \quad \text{ or } \quad e_3,e_4,e_5 \geq g.
\]

Assume $e_1,e_2\geq g$. Therefore, we know that at least we have to
analyze Cases 1.1 up to 1.4 for the three quartets, up to permuting $D$
and $\HH$. 
We also treat Cases 2.1 through 2.4 if $e_3,e_4\geq g$.

Suppose $(12|34)$ follows Case 1.1. Then, our  free parameters
$u,w$ satisfy $e_1-g \geq u \geq 0 $ and $0\geq w\geq
-2\min\{e_2-g,e_3,e_5\}$. 
%
The quartet $(12|35)$ has the same value of $s=\aaa+\cc-a-c>0$,
and $u$, so we do not need to switch the order of $D$ and $\HH$. We
claim that the fiber over $(12|35)$ can only correspond to Case
1.1. Assuming Cases 2.1 or 2.3, using that $g>0$ we see that
$\aaa=e_1+g-w/2> e_1=\aaa$, a contradiction. Likewise, for Cases 2.2
and 2.4 we obtain $\aaa=e_1+g-w/2 > e_1+u/2 =\aaa$, also a
contradiction.

Next, we claim that any point in the fiber over the quartet $(12|35)$ that
satisfies Case $1.2, 1.3$ or $1.4$, must also satisfy Case $1.1$. Call
$v=\cc+\ff-c-f$.  For Case $1.2$ we obtain $u=w=v=0$ so the point in
the fiber satisfies Case 1.1. Similarly, for Case 1.3 we get $w=v=0$,
and the point also satisfies Case $1.1$. Finally, for Case $1.4$, we
obtain $w=v$ and $u=0$, and the same conclusion holds.  Thus by
joining the weight values obtained for both quartets using Case 1.1 we
get $v=w$ and compatible values for all weights in $D$ and $\HH$.

To analyze the quartet $(12|45)$ we need to compute the difference
$(\aaa+\ff)-(a+f)$, since its sign \emph{determines the order} of $D$
and $\HH$ in expression \eqref{eq:3}. We obtain
$(\aaa+\ff)-(a+f)=(e_1+e_5+g)- (e_1+e_5-g-u)=2g+u>0$. Therefore, we do
\emph{not} need to switch the order of $D$ and $\HH$ in our
calculation.  
As before, there is only one possibility for the quartet $(12|45)$:
Case 1.1 must hold. The proof is analogous to the one of the quartet
$(12|34)$ \emph{because} we know that the order of $D$ and $\HH$ is
preserved.

Therefore, if $e_1,e_2\geq g$ and we are in Case 1.1 for the quartet
$(12|34)$ we have that $T=D\oplus \HH$
where
\[
\left\{\hspace{-.2cm}
\begin{array}{lllll}
a  =  e_1-g-u,&
c  =  e_3,&
d  =  e_2+g,&
e  =  e_4,&
f  =  e_5,\\
\bar{a}  =  e_1+g-\frac{w}{2},&
\bar{c}  =  e_3 +\frac{w}{2},&
\bar{d}  =  e_2-g+\frac{w}{2},&
\bar{e}  =  e_4 + \frac{w}{2},&
\bar{f}  =  e_5 + \frac{w}{2},
\end{array}\hspace{-.3cm}
\right.
\]
where $u,w$ are free parameters such that $e_1-g \geq
u\geq 0$, $0\geq w\geq -2\min\{e_2-g,e_3,e_4,e_5\}$.


On the other hand, assume Case 1.2 holds for $(12|34)$. Keeping the
notation of Case 1.1, a similar argument shows that $v=w$, $0\geq
u\geq g-e_2$ and $2\min\{e_3,e_4,e_5,e_1-g\}\geq w \geq 0$ on the
quartets $(12|34)$ and $(12|35)$, with compatible values for all edge
weights.

As before, to analyze the quartet $(12|45)$ we first need to compare
$\aaa+\ff$ with $a+f$. We have $\aaa+\ff-(a+f)=2g+w>0$, and so we do
not need to permute $D$ and $\HH$. Therefore, the only possibility for
the quartet $(12|45)$ is Case 1.2 which is compatible with the weights
obtained before. Hence, $T=D\oplus \HH$, where
\[
\left\{\hspace{-.2cm}
  \begin{array}{lllll}
 a  =  e_1-g-\frac{w}{2},&
  d  =  e_2+g+\frac{w}{2},&
  c  =  e_3-\frac{w}{2},&
  e  =  e_4-\frac{w}{2},&
f  =  e_5-\frac{w}{2},\\
  \bar{a}  =  e_1+g,&
  \bar{d}  =  e_2-g+u,&
  \bar{c}  =  e_3,&
  \bar{e}  =  e_4,&
  \bar{f}  =  e_5,
  \end{array}\hspace{-.3cm}
\right.
\]
where $u,w$ are free parameters satisfying $0\geq u\geq g-e_2$ and
$2\min\{e_1-g,e_3,e_4,e_5\}\geq w\geq 0$.

Next, assume Case 1.3 holds for $(12|34)$. As before, the same case
must also hold for $(12|35)$ and  we get $w=v$,  where
$2\min\{e_1-g,e_3,e_4,e_5\}\geq w\geq 0$ and $e_1-g-w/2 \geq u\geq 0$
and the values of all edge weights are compatible.
In addition, $\aaa+\ff-a-f =2g +(u+w)=s>0$, so $D$ and $\HH$ do not
switch in the expression for the quartet $(12|45)$. Like in the previous
situation, Case 1.3 also holds for $(12|45)$ and we conclude
$T=D\oplus \HH$ with
\[
\left\{\hspace{-.2cm}
  \begin{array}{lllll}
  a  =  e_1-g-u-\frac{w}{2},&\hspace{-.15cm}
  d  =  e_2+g+\frac{w}{2},&\hspace{-.15cm}
  c  =  e_3-\frac{w}{2},&\hspace{-.15cm}
e  =  e_4-\frac{w}{2},&\hspace{-.15cm}
f  =  e_5-\frac{w}{2},\\
  \bar{a}  =  e_1+g,&\hspace{-.15cm}
  \bar{d}  =  e_2-g,&\hspace{-.15cm}
  \bar{c}  =  e_3,&\hspace{-.15cm}
  \bar{e}  =  e_4,&\hspace{-.15cm}
   \bar{f}  =  e_5,
  \end{array}\hspace{-.35cm}
\right.
\]
where $u,w$ are free parameters satisfying 
$2\min\{e_1-g,e_3,e_4,e_5\}\geq w\geq 0$ and $e_1-g-w/2\geq u\geq 0$.

To finish, assume $(12|34)$ satisfies Case 1.4. As before, we can
easily see that the same case also holds for $(12|35)$ and we obtain
compatible values for all weights.
In this case, $\aaa+\ff-(a+f)=2g>0$, and $(12|45)$ also satisfies Case 1.4. Therefore
$T=D\oplus \HH$ with
\[
\left\{\hspace{-.2cm}
  \begin{array}{lllll}
  a = e_1-g,&\hspace{-.15cm}
  d = e_2+g,&\hspace{-.15cm}
  c = e_3,&\hspace{-.15cm}
  e = e_4,&\hspace{-.15cm}
  f = e_5,\\
  \bar{a} = e_1+g-\frac{w}{2},&\hspace{-.15cm}
  \bar{d} = e_2-g+ u+ \frac{w}{2} ,&\hspace{-.15cm}
  \bar{c} = e_3 + \frac{w}{2},&\hspace{-.15cm}
  \bar{e} = e_4 + \frac{w}{2},&\hspace{-.15cm}
  \bar{f} = e_5 + \frac{w}{2},
  \end{array}\hspace{-.35cm}
\right.
\]
where $u,w$ are free parameters satisfying $0\geq w\geq
-2\min\{e_2-g,e_3,e_4,e_5\}$ and $0\geq u\geq g-e_2-w/2$.
\medskip

Next, we analyze Cases 2.1 through 2.4 for the quartet $(12|34)$. We
keep the notation from the previous discussion. By symmetry on the
leaves $4$ and $5$, we know that Cases 2.1 through 2.4 also hold for
the quartet $(12|35)$.


Next, we argue that the quartet $(12|34)$ cannot satisfy Cases 2.1
through 2.4. Assume the contrary. By looking at the weight $\aaa=e_1$
coming from the quartets $(12|34)$ and $(12|35)$, we see that all
Cases 2.1 through 2.4 can hold for $(12|35)$, but they all imply that
$\aaa+\ff-(a+f)\leq -2g<0$. Hence, for the expression of the quartet
$(12|45)$ we have to switch the roles of $D$ and $\HH$. We claim that
Cases 1.1 through 1.4 are discarded for this quartet. By contraction,
assume $(12|34)$ satisfies Cases 2.1 or 2.3. Then by applying Cases
1.1 through 1.4 to the quartet $(12|45)$ we conclude that
$\dd>e_2=\dd$, a contradiction. Similarly, if  Cases $2.2$ or $2.4$
hold for $(12|34)$ we get $d\geq e_2-g<e_2=d$, also a
contradiction. Finally, by looking at the expression of $e$ coming from
Cases 2.1 through 2.4 for the quartet $(12|45)$, we obtain $e\leq
e_4-g<e_4+g=e$, a contradiction. In conclusion, Cases $2.1$ through
$2.4$ cannot hold for quartet $(12|34)$.

From the previous analysis we conclude: if $e_1,e_2\geq g$, then the
fiber over $T$ consists of Cases 1.1 through 1.4 for all three
quartets. Moreover, they all satisfy the same case. Conversely, assume $T$ is the mixture of $D$ and $\HH$. If
$e_1,e_2\geq g$ does not hold, this would imply that Cases 2.1 through 2.4
are the only candidates for $(12|34)$. But from the construction this
cannot happen. Therefore, we conclude that the fiber of the mixture
map over a point coming from a tree metric on five taxa with a single
internal edge is nonempty if and only if $e_1,e_2\geq g$.
\end{proof}
\begin{proof}   [Proof for Figure~\ref{sf:ejemploc}]
  As in the previous discussion, we label the edges of $T$ and of the
  star trees as in Figure~\ref{fig:labelsEjemploc}.
\begin{figure}[ht]
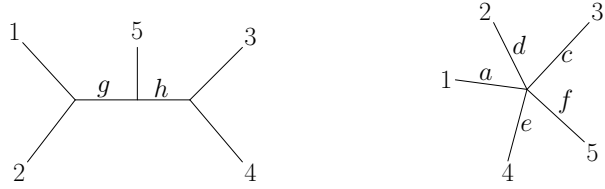

  \centering
  \includegraphics[scale=0.4]{angie.6} \qquad  \qquad \qquad
  \includegraphics[scale=0.4]{angie.4}
  \caption{Weights for edges in $T$ and in the star tree for $n=5$.}
\label{fig:labelsEjemploc}
\end{figure}
In what follows, we show that no tree metric with this topology is a mixture
of two star tree metrics.

Assume the contrary, and let $T$ be such a point. Then by
considering the quartet $(12|34)$ we know that $e_1,e_2\geq g+h$ or
$e_3,e_4 \geq g+h$. Without loss of generality we may assume $e_1,e_2
\geq g+h$. Now,
consider the quartet $(12|35)$. Since $e_1,e_2\geq g+h \geq g$, we have to
analyze all eight cases for $(12|34)$ and $(12|35)$. 

Suppose Cases 1.1 or 1.4 hold for the quartet $(12|34)$, so
$d=e_2+g+h$. If Cases 1.1 or 1.4 hold for $(12|35)$, we get
$e_2+g=d=e_2+g+h$, because the middle edge weight for $(12|35)$ is
$g$. Similarly, for Cases 1.2 and 1.3 we get $\aaa=e_1+g+h-w/2=e_1+g$
with $w\leq 0$ and $h>0$. Both situations lead to a
contradiction. Likewise, for Cases 2.1 and 2.3, we get
$d=e_2+g+h=e_2-{u}/{2}$ where $u\geq 0$, which cannot occur, whereas Cases
2.2 and 2.4 yield $d=e_2+g+h=e_2$, also a contradiction. Thus, Cases
1.1 and 1.4 are not admissible for the quartet $(12|34)$.


Next, suppose Cases 1.2 or 1.3 are satisfied by $(12|34)$. Then if we
assume Cases 1.1 or 1.4 for the quartet $(12|35)$ we get
$d=e_2+g=e_3+g+h+\frac{w}{2}$ with $w\geq 0$ and $h>0$, a
contradiction. Likewise, for Cases 1.2 and 1.3 we get
$\aaa=e_1+g+h=e_1+g$ which cannot happen.  For Cases 2.1 and 2.3 we get
$\aaa=e_1+g+h=e_1$, and Cases 2.2 and 2.4 yield 
$\aaa=e_1+g+h=e_1+{u}/{2}$ where $u\leq 0$, which cannot occur.



Finally, we analyze the Cases 2.1 through 2.4 for the quartet $(12|34)$. By
the previous discussion, we know that 
the quartet $(12|35)$ can only satisfy Cases 2.1 through 2.4 as
well. As in the previous discussion, we work with the weights $\cc$ and $\aaa$  and we  get
contradictions for all possible expressions of the quartet
$(12|35)$. This concludes our proof.
%
\end{proof}
\begin{remark}\label{rk:keyn=5}
  Note that in the proof of Figure~\ref{sf:ejemplob}, we found that
  all the quartets $(12|34)$, $(12|35)$ and $(12|45)$ satisfy the same
  case (either 1.1, 1.2, 1.3 or 1.4) and the fiber is a polyhedral
  complex whose maximal cells have dimension at most two. The free
  parameters involved on different quartets are \emph{the same} when
  restricting to each quartet, i.e. $w=v$ in the notation of the
  proof. This observation will be \emph{essential} to prove
  Theorem~\ref{thm:mainResult}.
\end{remark}

\subsection{Proof of Theorem~\ref{thm:mainResult}} 
  It remains to prove the item (ii) when $n\geq 6$. 
  From Lemma~\ref{lm:n=5}, it is clear that we have only one topology
  on $n$ taxa that gives a tree metric in the image of the mixture
  map, besides the star tree topology.  This topology is the one where
  the tree has a single internal edge, and it is illustrated in
  Figure~\ref{fig:nGralT'}. It is constructed by gluing two star trees
  with labels in the disjoint sets $I=\{i_1,\ldots, i_{p}\}$ and
  $J=\{j_1,\ldots, j_q\}$ with $I\sqcup J=[n]$. As before, we let $g$ be the
  weight of the unique internal edge and $e_k$ be the weight of the
  edge adjacent to the leaf $k\in [n]$.
\begin{figure}[ht]
  \centering
 \includegraphics[scale=0.5]{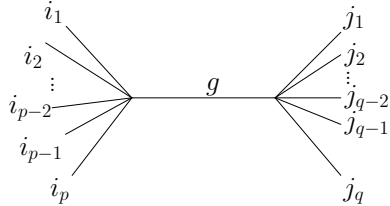}
\caption{Non-star candidate $T$ for a mixture of two star tree metrics on $n$ taxa.}
\label{fig:nGralT'}
\end{figure}
By symmetry on the taxa, we can assume $1,2 \in I$ and $3,4\in
J$. Moreover, we assume $|I|=p\leq q=|J|$. We claim that we have only two
cases to deal with, namely $|I|=2$ or $|I|>2$.

We first restrict the tree metric to the quartets $(12|34)$ and
$(12|3j)$, with $j>4$. Then, as we saw in the proof of
Lemma~\ref{lm:n=5} and in Remark~\ref{rk:keyn=5}, both quartets
satisfy the same case (among Cases 1.1 through 1.4) and $e_1,e_2\geq
g$. Likewise, the same case will hold for all quartets $(12|kl)$ where
$k,l\in J$.

Assume $|I|=2$. 
By gluing the previous partial solutions together we get that the
fiber over a tree metric with a single internal edge is a polyhedral
complex with eight maximal cells of dimension at most two if and only
if $e_1,e_2\geq g$. All other non-star tree metrics have empty fibers.

Now, we assume $|I|>2$. By switching the roles of $I$ and $J$
we conclude that the fiber is nonempty if and only if $e_i\geq g$ for
all $i\in [n]$ and that all the quartets $(kl|34)$ (resp.\ $(12|kl)$)
with $k,l\in I$ (resp.\ $k,l\in J$) satisfy the same case among Cases
2.1 through 2.4 (resp.\ Cases 1.1 through 1.4).  The correspondence
between Cases 1 and 2 mentioned in Remark~\ref{rk:symmetriesCases}
guarantees that we can construct $D$ and $\HH$ as in the case of five
taxa. Thus, the fiber over such tree metric is a polyhedral complex
with eight maximal cells of dimension at most two that is closed under
the natural action of $\ZZ_2$. Each one of these maximal cells is
isomorphic to a product of intervals, a trapezoid or a triangle in $\RR^2$.
%

\section{Tropical secant varieties and the space of star
  trees}\label{s:TropicalSecantVarieties}
In this section, we identify mixtures of finitely many star tree
metrics with relate tropical secant varieties of the cone of star tree
metrics.  For the basic definitions on tropical geometry we refer the
reader to \cite{RGStT}. For a gentle introduction to applications of
tropical geometry to phylogenetics, we recommend~\cite[Chapters
2-3]{ASCB}.

In~\cite[Theorem 3.4]{TropGrass}, Speyer and Sturmfels showed that the
space of (possibly negatively) weighted trees on $n$ taxa is (up to
sign) the tropicalization of the classical Grassmannian
$\mathscr{G}(2,n)$, called the \emph{tropical Grassmannian}. In
particular, the space of star trees is the tropicalization of the
toric variety parameterized by the monomial map:
\begin{equation}\label{eq:monomialMap}
  \psi\colon \CC[t_1^{\pm 1}, \ldots, t_n^{\pm 1} ] \to \CC[x_{ij}^{\pm 1}: 1\leq
  i<j\leq n]  \quad \psi_{ij}(\underline{t})=\alpha_{ij}t_it_j,
\end{equation}
for fixed scalars $\alpha_{ij}\in (\CC^*)^{\binom{n}{2}}$.
This toric variety is cut out by the ideal
  \begin{align*}
    I&=\langle x_{ij}-\alpha_{ij}t_it_j: 1\leq i< j\leq n \rangle
    \bigcap \CC[x_{ij}^{\pm 1}: 1\leq i< j\leq n ]  \\&=\langle
    x_{ij}x_{kl}-\frac{\alpha_{ik}\alpha_{kl}}{\alpha_{ik}\alpha_{jl}}
    x_{ik}x_{jl}\;;\;
    x_{il}x_{jk}-\frac{\alpha_{ik}\alpha_{kl}}{\alpha_{ik}\alpha_{jl}}
    x_{il}x_{jk} : i,j,k,l\rangle,\label{eq:5}
  \end{align*}
  where we identify $x_{ij}$ with $x_{ji}$ and $\alpha_{ji}$ with
  $\alpha_{ij}$ if $j<i$, and similarly for the other variables.  The
  $n$-dimensional linear space $L$ spanned by the star trees is the
  tropical variety of tropical rank one symmetric matrices. The space
  of star tree metrics is the intersection of $L$ with the positive
  orthant. 


  From expression~\eqref{eq:monomialMap} we know that $L$ is generated
  by the set of all vertices of the second hypersimplex
  $\Delta(2,n)\subset \RR^n$, i.e.~the exponents $\{e_i+e_j: 1\leq
  i<j\leq n\}$ of our monomial map $\psi$. The polytope has
  $\binom{n}{2}$ vertices and dimension $n-1$. We refer to
  \cite[Chapter~9]{GBandCP}, \cite[Chapter~6 $\S$3]{GKZ} for the historical
  background and basic properties of this well-studied
  polytope. 


As we mentioned in Section~\ref{s:Intro}, the mixtures of $k$ star tree
metrics are the points in the $k$-tropical secant of the space of star tree metrics. 
Due to the positivity restriction on the edge weights, \emph{tropical
  secants of cones} define the appropriate setting to analyze mixtures
of finitely many star tree metrics, as we now show. Given a cone
$\mathscr{L}$ in $\RR^m$, we define its $k$-tropical secant as the
collection of points $x$ in $\RR^m$ that can be written as
\[x_i=\big(p^{(1)} \oplus \ldots \oplus
p^{(k)}\big)_i:=\max\{p^{(1)}_i, \ldots, p^{(k)}_i\} \qquad \forall \;i\in
[n],
\] for suitable points $p^{(1)}, \ldots, p^{(k)}$ in
$\mathscr{L}$.  
Following the notation of \cite[Theorem 2.1]{tropicalDevelin}, we
replace the linear space $L$ by a rational polyhedral cone. Likewise,
the generators of $L$ are replaced by the extremal rays of this
cone. Our approach follows the \emph{max} convention for the tropical
semiring, as opposed to the $\emph{min}$ convention
of~\cite{tropicalDevelin}.
%
%
%
 Among the many results of~\cite{tropicalDevelin} that can be extended 
 to the cone setting, we have:
 \begin{theorem} \label{thm:2.1ForCones} 
   Let $\mathscr{L}\subset \RR^{m}$ be a polyhedral \emph{cone} with
   $d$ extremal rays. Consider the associated matrix $M_{\mathscr{L}}\in
   \RR^{d\times m}$ whose rows equal the previous $d$ rays. Let
   $V_{\mathscr{L}}=\{v_1, \ldots, v_m\}$ be the $m$-point configuration of
   $\RR^d$ given by the columns of $M_{\mathscr{L}}$. Then a vector $x\in \RR^m$
   is in the $k$-th tropical secant of ${\mathscr{L}}$ (with the $\max$
   convention) \emph{if and only if} the lower envelope of the
   polytope formed by the height vector $x$ has $k+1$ facets given by
   linear functionals with only \emph{positive} coefficients whose
   union contains each point of $V_{\mathscr{L}}$.
\end{theorem}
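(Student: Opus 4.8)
The plan is to follow the strategy of \cite[Thm~2.1]{tropicalDevelin} essentially verbatim, tracking the two structural changes that the cone setting introduces: the passage from a linear space to the positive span of the extremal rays, and the switch from the $\min$ to the $\max$ convention. First I would unwind the definition of the $k$-th tropical secant. By construction a point $x$ lies in it precisely when $x=\bigoplus_{s=0}^{k} y^{(s)}$, a coordinatewise maximum of $k+1$ points $y^{(0)},\dots,y^{(k)}$ of $L$. Because the rows of $M_L$ are the extremal rays of the cone $L$, every point of $L$ is an ordinary nonnegative combination $\sum_i c_i r_i$ with $c=(c_i)\ge 0$; reading this coordinate by coordinate gives $y^{(s)}_j=\langle c^{(s)},v_j\rangle$, where $v_j$ is the $j$-th column of $M_L$ and $c^{(s)}\ge 0$. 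Hence membership in the $k$-th secant is equivalent to the existence of $k+1$ vectors $c^{(0)},\dots,c^{(k)}\in\RR_{\ge 0}^{d}$ with $x_j=\max_{0\le s\le k}\langle c^{(s)},v_j\rangle$ for every $j$. This already isolates the cone-specific feature: the functionals $\ell_s:=\langle c^{(s)},\cdot\rangle$ are exactly the linear functionals on $\RR^d$ with \emph{positive coefficients}.

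Next I would translate the identity $x_j=\max_s\ell_s(v_j)$ into the geometry of the lifted configuration $\{(v_j,x_j)\}\subset\RR^{d+1}$. The standard lifting dictionary says that, under the $\max$ convention, the relevant subdivision is read off from the \emph{lower} faces of the polytope: a supporting affine form $\ell$ defines a lower facet exactly when $x_j\ge\ell(v_j)$ for all $j$ with equality on the facet. Thus the relation $x_j=\max_s\ell_s(v_j)$ says precisely that each lifted point $(v_j,x_j)$ lies on the lower envelope cut out by the forms $\ell_0,\dots,\ell_k$, that this envelope has at most $k+1$ facets, and that the projections of these facets cover $V_L$, since each $v_j$ belongs to the region where some $\ell_s$ attains the maximum. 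As each $\ell_s$ has nonnegative gradient $c^{(s)}$, these facets are given by linear functionals with positive coefficients, which is the assertion of the theorem. For the converse I would run the computation backwards: given a lower envelope with $k+1$ positive-coefficient facets $\ell_0,\dots,\ell_k$ covering every point of $V_L$, I set $y^{(s)}_j=\ell_s(v_j)$, observe that $y^{(s)}=\sum_i c^{(s)}_i r_i\in L$ because the gradient $c^{(s)}$ of $\ell_s$ is a nonnegative ray-combination, and recover $x=\bigoplus_s y^{(s)}$ from the covering property together with the fact that a lower facet lies weakly below all lifted points.

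The two steps I expect to need the most care are bookkeeping rather than conceptual. The first is the exact-versus-at-most count of facets: the maximum of $k+1$ forms may present fewer than $k+1$ regions of linearity, so to exhibit \emph{exactly} $k+1$ covering facets I would pad the list with redundant copies, or with infinitesimally tilted forms having still-positive coefficients, which changes nothing since repeating a summand in a tropical sum leaves the sum unchanged. The second is the positivity ledger: I must verify that nonnegativity of the scalars $c^{(s)}$ is equivalent to nonnegativity of the facet gradients, and that the functionals remain homogeneous; this is exactly where being a \emph{cone} rather than a general polyhedron matters, since every point of $L$ is a nonnegative ray-combination with no constant term. Granting these, the argument is a direct transcription of \cite[Thm~2.1]{tropicalDevelin}, with $\min$ replaced by $\max$ and the linear span replaced by the positive span, the orientation being consistent because, as noted above, the $\max$ convention is dual to the lower faces of the lifted configuration.
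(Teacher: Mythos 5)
Your proposal is correct and takes essentially the same approach as the paper: the paper's entire proof consists of the remark that Develin's argument for \cite[Thm~2.1]{tropicalDevelin} carries over line by line once one notes that points of a cone are \emph{nonnegative} combinations of its extremal rays, so that the supporting functionals acquire positive coefficients --- exactly the adaptation (max convention, positive span in place of linear span) that you transcribe. The bookkeeping you flag (padding to get exactly $k+1$ cells, and identifying nonnegative ray-coefficients with positive facet gradients) is precisely what the paper silently absorbs into its notion of \emph{positive regular subdivisions}.
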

Notice that points in the various tropical secants of $\mathscr{L}$
define special regular subdivisions of the polytope generated by
$V_{\mathscr{L}}$.  The positivity condition for the coefficients
comes from the fact that we work with cones rather than linear spaces.
For simplicity, we call such regular subdivisions \emph{positive
  regular subdivisions}. Other than these small differences, the proof
follows line by line the proof of \cite[Theorem~2.1]{tropicalDevelin}.
The cone $\mathscr{L}$ of star tree metrics has $n$ extremal rays,
namely, the \emph{cut metrics}
corresponding to the partition $\{i\} \bigsqcup ([n]\smallsetminus
\{i\})$, as we vary $i\in [n]$. These metrics are defined as
$D_i(i,j)=1$ for $j\neq i$ and $0$ otherwise. The integer matrix
$M_{\mathscr{L}}$ is the matrix of exponents of the monomial map
$\psi$ from~(\ref{eq:monomialMap}).




\medskip

Another interesting question to study is the relationship between the
mixtures of star tree metrics, the tropical secants of cones and the
secondary fan of $\Delta(2,n)$.  By \cite[Theorem~2.1,
Corollary~2.2]{tropicalDevelin} we know that each regular subdivision
in the $k$-th tropical secant variety of the space of star trees
corresponds to a polyhedral cone of height vectors. In particular,
this yields a decomposition of this tropical secant variety as a
polytopal complex. In the case of the $k$-th tropical secant variety
of a pointed cone we have:
\begin{corollary}
  The $k$-th tropical secant variety of a pointed cone $\mathscr{L}$
  is a cone
  from $\mathscr{L}$ over a polytopal complex, which we call the $k$-th tropical
  secant complex of $\mathscr{L}$. The faces correspond to positive regular
  subdivisions of $V_{\mathscr{L}}$ in which there exist $k+1$ facets containing
  all of the points, where a face $F$ contains a face $G$ if the
  regular subdivision associated to $F$ refines the one of 
$G$.
\end{corollary}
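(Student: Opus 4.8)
The plan is to deduce this statement from Theorem~\ref{thm:2.1ForCones} exactly as \cite[Cor~2.2]{tropicalDevelin} is deduced from \cite[Thm~2.1]{tropicalDevelin}, carrying the sign constraint through the argument. The starting point is the standard fact that every height vector $x\in\RR^m$ induces a regular subdivision of the point configuration $V_L$, obtained by projecting the lower faces of $\mathrm{conv}\{(v_i,x_i):1\le i\le m\}$, and that two height vectors induce the \emph{same} subdivision precisely when they lie in a common relatively open cell of the secondary fan of $V_L$. These cells, together with their faces, tile $\RR^m$, and modifying $x$ by any vector in the row space of $M_L$ leaves the induced subdivision unchanged; this row space, the linear span of $L$, is the lineality of the secondary fan.

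First I would observe that, by Theorem~\ref{thm:2.1ForCones}, membership of $x$ in the $k$-th tropical secant of $L$ is a property of the induced subdivision alone: $x$ lies in the secant if and only if that subdivision is \emph{positive} and can be covered by $k+1$ of its facets. Since this condition is constant on each relatively open cell, the $k$-th secant is a union of closed cells of the secondary fan, hence a subfan. Because $L$ is pointed, its span is a genuine lineality subspace and the entire secant is invariant under translation by it; taking a transverse cross-section therefore exhibits the secant as a cone from $L$ over the compact polytopal complex cut out on the quotient. This is the $k$-th tropical secant complex.

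Finally I would identify the face poset. In the secondary fan the maximal cones correspond to the finest (triangulating) subdivisions, and passing to a face of a cone coarsens the subdivision; equivalently, a cell $\sigma$ is a face of a cell $\tau$ exactly when the subdivision of $\tau$ refines that of $\sigma$. Passing to the cross-section turns this into the containment relation on faces of the complex, which is precisely the stated rule: $F$ contains $G$ if and only if the subdivision of $F$ refines the subdivision of $G$. The main obstacle, and the only genuine difference from the linear case, is checking that restricting to \emph{positive} regular subdivisions still yields a well-defined polytopal complex: one must verify that the sign constraint on the facet-supporting functionals is inherited correctly under refinement, so that the positive, $(k+1)$-coverable cells and their faces close up into a complex rather than a mere collection of cells. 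Once this compatibility is established, the remainder of the argument transcribes \cite[Cor~2.2]{tropicalDevelin} verbatim.
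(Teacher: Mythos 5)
Your overall strategy---transcribing the deduction of \cite[Cor~2.2]{tropicalDevelin} from \cite[Thm~2.1]{tropicalDevelin}, with Theorem~\ref{thm:2.1ForCones} playing the role of the latter---is exactly the route the paper intends (the paper offers no separate argument; it cites Develin and notes that positivity is the only modification). But two of your intermediate steps are false, and they fail precisely because of that modification. You claim that membership of $x$ in the $k$-th tropical secant of the cone $L$ ``is a property of the induced subdivision alone,'' hence constant on relatively open cells of the secondary fan, so that the secant is a subfan of the secondary fan. In the cone setting this is wrong: Theorem~\ref{thm:2.1ForCones} requires the facet-supporting functionals of the lower envelope---which are determined by the height vector $x$ itself, not by the combinatorics of the subdivision---to have positive coefficients, and this positivity imposes additional linear inequalities on $x$ \emph{inside} each secondary cell. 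Concretely, take the trivial subdivision: its relatively open secondary cell is the whole row space of $M_L$, i.e.\ the linear span of $L$, yet a point $x=M_L^T\lambda$ with some $\lambda_i<0$ lies in no tropical secant of the cone $L$ (any $v\in L$ has all coordinates obtained from nonnegative combinations, so $v\le x$ coordinatewise is impossible when some $x_{ij}<0$ after translating appropriately; for $k=0$ this is simply the statement that the $0$-th secant is $L$, a proper subcone of $\mathrm{span}(L)$). The same example kills your second step: the secant is invariant under classical addition of elements of the \emph{cone} $L$ (if $x=v_0\oplus\cdots\oplus v_k$ and $y\in L$, then $x+y=(v_0+y)\oplus\cdots\oplus(v_k+y)$, using that the convex cone $L$ is closed under addition), but it is \emph{not} invariant under translation by $\mathrm{span}(L)$---otherwise the $0$-th secant would equal $\mathrm{span}(L)$ rather than $L$. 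So $\mathrm{span}(L)$ is not a lineality space of the secant, and your transverse cross-section argument, which quotients by it, does not go through; ``cone from $L$'' in the statement must be read with $L$ the cone, exactly as Develin reads it with $L$ the linear space.

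The compatibility question you defer to the end (``once this compatibility is established\dots'') is in fact where all of the content lies, and it is not a mere bookkeeping check about refinements: because positivity cuts each secondary cell down to a (possibly proper) polyhedral subcone, the faces of the $k$-th tropical secant complex are these subcones---one for each positive regular subdivision with the $(k+1)$-facet covering property---rather than full cells of the secondary fan, and one must show directly that this collection of subcones, modulo the additive action of $L$, closes up into a polytopal complex with the stated refinement poset. Your write-up never confronts this, because the earlier (false) constancy claim renders the issue invisible; as it stands, the argument proves the linear-space statement of \cite[Cor~2.2]{tropicalDevelin} but not its cone analogue.
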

Consider the spaces of star trees, of star tree metrics, and their
corresponding tropical secant complexes. We know that the complex
corresponding to star tree metrics 
is strictly contained in the complex corresponding to star
trees. 
In particular, it would be desirable to study how the cell structures
of both complexes relate. In addition, one can investigate how the
cell structure of this complex restricted to the space of tree metrics
compares to the subdivision of the tropical Grassmannian
$\mathscr{G}(2,n)$ into topological types.

In \cite{DM}, Cartwright and Chan study the possible \emph{star tree ranks}
of dissimilarity maps (see
Definition~\ref{def:dissMap}). 
These ranks are defined as the minimum $r$ such that any dissimilarity
map 
can be written as a mixture of $r$ star trees. If no such $r$ exists,
we say that the matrix has infinite star tree rank. We define the
\emph{star tree metric rank} of a dissimilarity map in an analogous
way. It follows from the definition that the star tree metric ranks
are bounded below by the star tree ranks.

In~\cite[Theorem 5]{DM} the authors show that for $n\geq 3$, any
dissimilarity map on $n$ taxa can be written as the mixture of $n-2$
star trees, possibly with negative weights on their edges. Moreover,
this bound is tight. 
By comparing their result with Lemma~\ref{lm:n=4}, we see that any
metric on four taxa is a tropical mixture of two star trees, but not
necessarily two star tree {metrics}.

We now rephrase these properties in the language of tropical geometry. Since the configuration of points associated to $\Delta(2,n)$ is in
convex position, \cite[Corollary~2.3]{tropicalDevelin} shows that for
$k\gg 0$, the $k$-th tropical secant (called the \emph{$\infty$-th
  tropical secant}) of the space of star trees equals the ambient
space $\RR^{\binom{n}{2}}$.  Notice that \cite[Theorem 5]{DM} gives us
$n-2$ as a sharp lower bound on such $k$'s. 
%
%
However, the space of tree metrics does not equal the $\infty$-th
tropical secant variety of the star tree metrics. For example,
Lemma~\ref{lm:n=4} implies that the cut metric $D$ defined by the
quartet $(12|34)$, where $D(1,3)=D(1,4) =D(2,3)=D(2,4)=1$ and $0$
otherwise, has star tree metric rank bigger than two. Furthermore,
this rank is infinite. 
By contradiction, assume that for some $N\geq 1$ we can write
$D=\bigoplus_{k=1}^N S_k$, where all $S_k$ are star tree metrics. For
each $k,i$, we let
$e_i^{(k)}$ be the weight of the edge pendant to leaf $i$ in $S_k$. Then
we have $S_k(1,2)=e_1^{(k)}+e_2^{(k)}\leq D(1,2)=0$ and
$S_k(3,4)=e_3^{(k)}+e_4^{(k)}\leq D(3,4)=0$. Since all weights are nonnegative, we conclude $e_{i}^{(k)}=0$ for all
$i,k$, hence $D=0$, a contradiction. This example can be extended to
higher number of taxa, by considering any \emph{cut metric} given by a
partition of $[n]$ where, at least, two subsets have more than one
element.  In particular, these results together with
\cite[Corollary~2.3]{tropicalDevelin} imply that regular subdivisions
of the second hypersimplex need not be positive, as one would expect
from the constructions.

As we see from the previous discussion, mixtures of star trees and of
star tree metrics have completely different behavior, although the
underlying combinatorics in both cases are closely related. Mixtures
of star trees were extensively studied in~\cite{DM}, but basic
questions on the mixtures of star tree metrics remain open. Among
these are the characterization of $k$-mixtures of star tree metrics
for $k\geq 3$, 
and the lack of effective membership tests for these sets. We believe that a serious study
of tropical secants of cones will help us to attack most of these open
problems.


\section*{Acknowledgments}\label{sec:aknowledgements}
This project developed from a course project in Lior Pachter's
Math~239 graduate course at UC Berkeley.  I wish to thank Lior Pachter
and Bernd Sturmfels for their guidance and suggestions, and also
Dustin Cartwright for useful discussions. Finally, I thank the
anonymous referee for careful reading and many clarifying suggestions.

\bibliography{bibliography}{}
\bibliographystyle{abbrv}
\bigskip

\end{document}